\newcommand{\B}[1]{{\mathbf #1}}
\newtheorem{theorem}{Theorem}[section]
\newtheorem{theorem*}{Theorem}
\newtheorem{lemma}[theorem]{Lemma}
\newtheorem{proposition}[theorem]{Proposition}
\newtheorem{corollary}[theorem]{Corollary}
\newtheorem{problem}[theorem]{Problem}
\newtheorem*{question*}{Question}
\theoremstyle{definition}
\newtheorem{example}[theorem]{Example}
\newtheorem*{remark*}{Remark}
\newtheorem*{remarks*}{Remarks}
\newtheorem*{corollary*}{Corollary}
\numberwithin{figure}{section}
\numberwithin{table}{section}
\numberwithin{equation}{section}
\def\B{\mathbf}
\newcommand{\OP}{\operatorname}
\begin{document}

\title[Aut-invariant word norm on graph products]{Aut-invariant word norm on right angled Artin and right angled Coxeter groups}
\author{Micha\l\ Marcinkowski}
\address{Regensburg Universit{\"a}t \& Uniwersytet Wroc\l awski}
\email{marcinkow@math.uni.wroc.pl}
\keywords{Artin groups, Coxeter groups, quasimorphisms, Aut-invariant norm}
\subjclass[2010]{51,20}

\begin{abstract} 
We prove that the $\OP{Aut}$-invariant word norm on right angled Artin and right angled Coxeter groups is unbounded 
(except in few special cases). 
To prove unboundedness we exhibit certain characteristic subgroups. 
This allows us to find unbounded quasimorphisms which are Lipschitz with respect to
the $\OP{Aut}$-invariant word norm. 
\end{abstract}

\maketitle



Let $W$ be a right angled Artin group or a right angled Coxeter group. 
The full automorphism group, $\OP{Aut}(W)$, is a classical object studied both from combinatorial 
and geometrical points of view (\cite{gpr}, \cite{crsv} and references therein, for right angled Artin groups we refer to a survey \cite{v}). 
Examples of $\OP{Aut}(W)$ include $\OP{Aut}(F_n)$ and $\OP{Aut}(Z^n) = GL_n(Z)$. 
If~$W$~is a right angled Artin group, then $\OP{Aut}(W)$ interpolates between these two groups. 

In this paper we study the standard action of $\OP{Aut}(W)$ on $W$ in relation to the $\OP{Aut}$-invariant word norm on $W$.
This word norm is defined analogously to the standard word norm 
where one requires invariance under the full automorphism group. 
For free groups and surface groups such norms were already studied in \cite{bm,bst} and it is worth to note, 
that they have natural interpretations. 
In the case of free groups, the $\OP{Aut}$-invariant norm is the 
word norm associated to the set of all primitive elements, i.e., elements which can be extended to a free basis.  
For surface groups, it is the word norm associated to the set of all based simple loops. 

The purpose of this paper it to prove Theorem \ref{t:main2}, which classifies graph products of abelian groups
with bounded $\OP{Aut}$-invariant norm. 
Note that right angled Artin groups and right angled Coxeter groups are special cases of graph products of abelian groups. 

Before stating our result let us discuss few basic examples that motivated this note.
It was known that the free groups $F_n$, for $n=1,2,\ldots$, have unbounded $\OP{Aut}$-invariant norms \cite{bst}. 
On the other hand, $Z^n$ has bounded $\OP{Aut}$-invariant norm for $n>2$, see Lemma \ref{l:oneclass}.
A question which arises is: what right angled Artin groups have bounded $\OP{Aut}$-invariant norm? 
The same question we ask for right angled Coxeter groups.

Denote by $C_n$ the cyclic group of order $n$. 
We have that the dihedral group $D_\infty = C_2 \ast C_2$ has bounded $\OP{Aut}$-invariant norm, but already
the group $PSL(Z) = C_2 \ast C_3$ has unbounded $\OP{Aut}$-invariant norm, see Lemma \ref{l:dihedral} and Lemma \ref{l:rolli}. 
The natural class to study all groups mentioned above is the class of graph products of abelian groups. 

Let us now state the main result in the form specified to right angled Artin groups and right angled Coxeter groups. 

\begin{theorem}\label{t:main}
Let $W$ be a right angled Artin or a right angled Coxeter group. 
The $\OP{Aut}$-invariant word norm on $W$ is bounded if and only if 
$W=Z^n,~n>1$ or $W = D_{\infty}^n \times C_2^m$, where $D_{\infty}$ is the infinite dihedral group 
and $C_2$ is the group of order 2. 
\end{theorem}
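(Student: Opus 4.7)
The theorem splits into a boundedness direction (``if'') and an unboundedness direction (``only if'').

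\emph{Boundedness direction.} I would combine the stated lemmas with an elementary product-closure observation. Lemma~\ref{l:oneclass} handles $W = \B{Z}^n$ for $n > 1$; Lemma~\ref{l:dihedral} handles $D_\infty$; the group $C_2$ is finite so its Aut-invariant norm is trivially bounded. The remaining ingredient is: if $G_1$ and $G_2$ have bounded Aut-invariant norms (with respect to their standard generating sets), then so does $G_1 \times G_2$. This is immediate because $\OP{Aut}(G_1) \times \OP{Aut}(G_2)$ embeds into $\OP{Aut}(G_1 \times G_2)$, so the $\OP{Aut}(G_1 \times G_2)$-invariant word norm is dominated by the $(\OP{Aut}(G_1) \times \OP{Aut}(G_2))$-invariant one, which is at most the sum of the factorwise norms via $(g_1, g_2) = (g_1, e)(e, g_2)$.

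\emph{Unboundedness direction.} The aim is to produce, for each $W$ outside the classified list, an unbounded $\OP{Aut}(W)$-invariant quasimorphism $q \colon W \to \B{R}$. Such a $q$ is automatically Lipschitz with respect to the Aut-invariant word norm, because it is bounded on the Aut-orbit of each standard generator while the defect bound controls the total along a minimal word; hence unboundedness of $q$ forces unboundedness of the Aut-invariant norm. Following the approach announced in the abstract, I would locate a characteristic (i.e. $\OP{Aut}(W)$-invariant) subgroup $H \leq W$ together with an Aut$(W)$-equivariant homomorphism onto a target group $G$ carrying a known Aut-invariant quasimorphism: namely $F_2$ (covered by \cite{bst}) for the RAAG cases, or $C_2 \ast C_3 = \OP{PSL}_2(\B{Z})$ (covered by Lemma~\ref{l:rolli}) for the RACG cases. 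The case analysis by the defining graph $\Gamma$ runs as follows: (i) for a RAAG $A_\Gamma$ with $\Gamma$ not a complete graph on at least two vertices, one has two non-adjacent generators spanning a copy of $F_2$; (ii) for a RACG $W_\Gamma$, unwinding the product decomposition shows that $W_\Gamma \cong D_\infty^n \times C_2^m$ precisely when the complement $\overline{\Gamma}$ is a disjoint union of edges and isolated vertices, so failure of this condition yields a vertex of degree $\geq 2$ in $\overline{\Gamma}$ and hence three generators spanning either $C_2 \ast C_2 \ast C_2$ or a subgroup surjecting onto $C_2 \ast C_3$.

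\emph{Principal obstacle.} The subtle point is the simultaneous requirement of \emph{characteristicity} of $H$ under the full $\OP{Aut}(W)$---whose generators include not only graph automorphisms and inversions but also transvections and partial conjugations (Laurence--Servatius for RAAGs, their analogues for RACGs), which move generators in non-obvious ways---and \emph{equivariance} of the chosen quasimorphism on $G$ under the induced $\OP{Aut}(W)$-action. Absent both, the pullback of a quasimorphism need not be Aut-invariant on $W$. Overcoming this is where one must work inside a sufficiently deep characteristic subgroup (for instance sitting inside the commutator subgroup or an appropriate verbal subgroup) and choose the target quasimorphism carefully so that it is preserved by the induced action; producing this matching is the main technical content of the argument.
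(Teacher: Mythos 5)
Your boundedness direction is essentially the paper's: Lemma \ref{l:oneclass} for $Z^n$, Lemma \ref{l:dihedral} for $D_\infty$, and closure under direct products via $\OP{Aut}(G_1)\times\OP{Aut}(G_2)\leqslant\OP{Aut}(G_1\times G_2)$. The unboundedness direction, however, has a genuine gap, and it is exactly the point you flag as the ``principal obstacle'' and then leave unresolved. The subgroups you propose to use --- an $F_2$ spanned by two non-adjacent generators of a right angled Artin group, or a $C_2\ast C_2\ast C_2$ inside a right angled Coxeter group --- are in general \emph{not} characteristic, and the kernels of the corresponding standard retractions are not $\OP{Aut}$-invariant, precisely because of dominated transvections. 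Concretely, for the Artin group on vertices $a,b,c$ with the single edge $\{a,b\}$, the set $X=\{a,c\}$ spans an $F_2$, but $Lk(b)=\{a\}\subseteq St(a)$, so the transvection $b\mapsto ba$ is defined; it sends $b\in K_X$ to $ba\notin K_X$. Hence the retraction is not equivariant and the pullback of a quasimorphism from $F_2$ need not be bounded on the $\OP{Aut}$-orbit of the generators. A secondary conceptual slip: you ask for an $\OP{Aut}(W)$-\emph{invariant} quasimorphism, which is strictly more than what is needed and is in fact posed as an open problem in the paper (Problem \ref{p1}); Lemma \ref{l:qm.undist} only requires a homogeneous quasimorphism bounded on the orbit $\bar S$ of a generating set. (Also, neither $C_2\ast C_2\ast C_2$ nor $C_2\ast(C_2\times C_2)$ surjects onto $C_2\ast C_3$, since every homomorphic image of these groups is generated by involutions while the abelianisation of $C_2\ast C_3$ is $C_6$; the paper handles $C_2^{k_1}\ast C_2^{k_2}$ with $\max(k_1,k_2)>1$ via hyperbolicity and Calegari's theorem inside Lemma \ref{l:rolli}.)

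What actually overcomes the obstacle in the paper, and is absent from your proposal, is the following chain: pass to the finite-index pure automorphism group $\OP{Aut}^0$ (Corollary \ref{c:eq}); introduce the preorder $v\leqslant_\tau w$ (``$\tau_{v,w}$ is defined'') and prove that $K_X$ is $\OP{Aut}^0$-invariant exactly when $X$ is a lower cone for $\leqslant_\tau$ (Lemma \ref{l:lower.cone}); then induct on the number of $\sim_\tau$-equivalence classes by retracting away a maximal class $M$. In the critical case where $W_{V-M}$ is already of the bounded type $Z^n\times D_\infty^m\times F$, one shows $M$ is both maximal and minimal for $\leqslant_\tau$, so the $\OP{Aut}^0$-orbit of the generating set lies in the set of conjugates of $W_M\cup W_{V-M}$, and the split quasimorphism of Lemma \ref{l:rolli} on the free product $W_M\ast W_{V-M}$ is bounded there (Lemma \ref{l:free.product}). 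Without the lower-cone invariance criterion and this free-product quasimorphism, your case analysis on $\Gamma$ identifies plausible target subgroups but does not produce a quasimorphism that is Lipschitz for the $\OP{Aut}$-invariant word norm, so the unboundedness direction is not established.
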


The outline of the proof is as follows. 
First we determine which special subgroups of $W$ are (almost) characteristic.
Passing to quotients allows us to reduce the problem to simpler Artin and Coxeter groups. 
Unboundedness is proven by finding unbounded quasimorphism on~$W$~that is Lipschitz 
with respect to the $\OP{Aut}$-invariant word norm. 
It~follows that if the $\OP{Aut}$-invariant word norm is unbounded, there exist 
elements $w \in W$ that are undistorted in this norm, 
i.e., the $\OP{Aut}$-invariant word norm of $w^n$ growths linearly with $n$.

\textbf{Acknowledgements}. 
The author was supported by SFB 1085 ``Higher Invariants'' funded by the Deutsche Forschungsgemeinschaft DFG.
Part of this work was conducted during the author's stay at the Ben Gurion University, 
supported by GIF YOUNG GRANT \#I-2419-304.6/2016. 
The author would like to thank Michael Brandenbursky for his support and helpful discussions. 

\section{Aut-invariant word norm and quasimorphisms}

In this paper we are interested in the $\OP{Aut}$-invariant word norms. 
However, to prove our main result, we need to introduce a slightly broader class of norms.

We say that two norms $|\mathord{\cdot}|_1$ and $|\mathord{\cdot}|_2$ are \textbf{bi-Lipschitz equivalent}, if there exists
$C \in \B R$ such that $C^{-1}|\mathord{\cdot}|_2 \leqslant |\mathord{\cdot}|_1 \leqslant C|\mathord{\cdot}|_2$.

Suppose $G$ is a finitely generated group and $H \leqslant \OP{Aut}(G)$.
Let $S \subseteq G$ be a finite set such that $\bar{S} = HS = \{ \psi(s)~|~\psi\in H, s\in S\}$ generates $G$. 
By $|\mathord{\cdot}|_{\bar{S}}$ we denote the word norm associated to the subset $\bar{S}$, i.e.:
$$
|x|_{\bar{S}} = min\{n~|~x = s_1\ldots s_n,~\text{where}~s_i\in \bar{S}~\text{for each}~i\}.
$$
This norm is $H$-invariant, i.e., $|x|_H = |\psi(x)|_H$ for $\psi \in H$. 
We define the $\boldsymbol H$\textbf{-invariant word norm} on $G$ to be the bi-Lipschitz equivalence class of $|\mathord{\cdot}|_{\bar{S}}$.
It is straightforward to show that this definition does not depend on the choice of $S$.
Usually we refer to the $H$-invariant norm as to a genuine norm on $G$, 
implicitly choosing a representative of the bi-Lipschitz equivalence class. 

\begin{example}
\begin{enumerate}
\item If $H=\{e\}$, then $|\mathord{\cdot}|_H$ is the standard word norm on a finitely generated group.
\item If $H = \OP{Inn}(G)$, then $|\mathord{\cdot}|_H$ is the bi-invariant word norm. 
If~$G$~is a Coxeter group, the bi-invariant norm is equivalent to the reflection norm \cite{gk,bgkm}.
\item Let $H = \OP{Aut}(G)$. If $G$ is a free group, then  $|\mathord{\cdot}|_H$ is equivalent to the primitive norm.
If $G$ is a surface group, then $|\mathord{\cdot}|_H$ is equivalent to the simple loops norm \cite{bm}.
\end{enumerate}
\end{example}

We are interested in the special case when $H = \OP{Aut}(G)$. 
For simplicity we write $|\mathord{\cdot}|_{Aut}= |\mathord{\cdot}|_{Aut(G)}$ when the group $G$ is understood. 
The norm $|\mathord{\cdot}|_{Aut}$ is called the $\OP{\bf Aut}$-\textbf{invariant word norm}.
For more detailed discussion on $H$-invariant word norms we refer to \cite{bm}.

Let $W$ be a right angled Artin or Coxeter group. 
In what follows, we need to focus not on $|\mathord{\cdot}|_{Aut}$ directly, but on 
$|\mathord{\cdot}|_{H_0}$ where $H_0$ is a certain finite index subgroup of $\OP{Aut}(W)$. 
Due to the following lemma, these norms are equivalent. 
\begin{lemma}\label{l:equivalent}
	Let $H_0 \leqslant H \leqslant \OP{Aut}(G)$ and let $H_0$ be finite index in $H$. 
	Then $|\mathord{\cdot}|_H$ and $|\mathord{\cdot}|_{H_0}$ are equivalent.
\end{lemma}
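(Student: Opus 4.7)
The plan is to produce, for a single choice of representative of $|\cdot|_H$, a matching representative of $|\cdot|_{H_0}$ whose associated word norm is literally identical. Recall that $|\cdot|_H$ is by definition the bi-Lipschitz class of $|\cdot|_{\bar S}$ for any finite $S \subseteq G$ with $\bar S = HS$ generating $G$, and similarly $|\cdot|_{H_0}$ is the class of $|\cdot|_{\overline{S'}}$ for any finite $S' \subseteq G$ with $H_0 S'$ generating $G$. So it suffices to exhibit $S$ and $S'$ for which $HS$ and $H_0 S'$ agree as subsets of $G$.

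First I would fix any admissible finite $S$ for $|\cdot|_H$. Since $H_0$ has finite index in $H$, pick a finite set of coset representatives $T \subseteq H$ with $H = \bigsqcup_{t \in T} H_0 t$, so that $H_0 T = H$. Define
\[
S' \;=\; TS \;=\; \{\, t(s) \,:\, t \in T,~ s \in S \,\},
\]
which is finite because both $T$ and $S$ are. Then
\[
H_0 S' \;=\; H_0 T S \;=\; H S,
\]
so $H_0 S'$ generates $G$ (making $S'$ admissible for $|\cdot|_{H_0}$), and the functions $|\cdot|_{H_0 S'}$ and $|\cdot|_{\bar S}$ on $G$ agree pointwise. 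Hence the bi-Lipschitz classes of these two word norms coincide, which is precisely the statement that $|\cdot|_{H_0}$ and $|\cdot|_H$ are equivalent.

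There is no substantive obstacle. The only point to keep in mind is the preceding remark that each of the norms $|\cdot|_H$ and $|\cdot|_{H_0}$ is a bi-Lipschitz class independent of the admissible generating set chosen to represent it; the paper flags this as straightforward. Granting that independence, the proof reduces to the set-theoretic identity $H_0 T S = HS$, whose single input is the finiteness of $[H:H_0]$.
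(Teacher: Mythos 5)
Your proof is correct and follows essentially the same route as the paper: both arguments produce a finite $S'$ with $H_0S' = HS$ (the paper by counting the finitely many $H_0$-orbits in $\bar S$, you by taking $S' = TS$ for a finite set $T$ of coset representatives), so that the very same word norm represents both equivalence classes. The two constructions are interchangeable and rely on the same single input, the finiteness of $[H:H_0]$.
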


\begin{proof}
	Let $S$ be a finite subset of $G$ such that $\bar{S}=HS$ generates $G$. 
	Since $\#(\bar{S}/H_0) = \#(H/H_0)\#(\bar{S}/H)$, there exists a finite subset $S'$ of $G$ satisfying 
	$\bar{S}=H_0S'$. Thus $|\mathord{\cdot}|_{\bar{S}}$ can be used to define 
	both $|\mathord{\cdot}|_H$ and $|\mathord{\cdot}|_{H_0}$. 
\end{proof}

Let us recall a notion of a quasimorphism. 
A function $q\colon G\to\B R$ is called a \textbf{quasimorphism} 
if $|q(a)-q(ab)+q(b)|<D$ for some $D \in \B R$ and all $a,b\in G$. 
A quasimorphism is \textbf{homogeneous} if $q(x^n)~=~nq(x)$ for all $n \in\B Z$ and all $x \in G$.
Homogeneous quasimorphisms are constant on conjugacy classes, i.e., $q(x) = q(yxy^{-1})$ for all $x,y\in G$.
The homogenisation of $q$ is defined by 
$$
\bar{q}(x) = \lim_{s\to \infty} \frac{q(x^s)}{s}.
$$

The function $\bar{q}$ is a homogeneous quasimorphism. 
Moreover, there exists $C \in \B R$ such that $|q(x) - \bar{q}(x)| < C$ for every $x \in G$.
We refer to \cite{scl} for further details.

An element $x \in G$ is \textbf{undistorted} in a norm $|\mathord{\cdot}|$ 
if there exists a positive real number $C$ such that $|x^n| > Cn$ for all $n \in \B Z$.
Otherwise $x$ is \textbf{distorted}. 
If $x$ is undistorted in the $\OP{Aut}$-invariant word norm, we call it
$\OP{\bf Aut}$-\textbf{undistorted}. Otherwise $x$ is $\OP{\bf Aut}$-\textbf{distorted}. 

The relation between quasimorphisms and $H$-invariant word norms is explained in the following Lemma. 

\begin{lemma}[\cite{bm}, Lemma 1.5]\label{l:qm.undist}
	 
	 Let $G$ be a group and let $H \leqslant \OP{Aut}(G)$. 
	 Let $S \subseteq G$ be a finite set such that $\bar{S} = HS$ generates $G$.
	 If there exists a homogeneous quasimorphism $q\colon G \to\B R$ bounded on $\bar{S}$ 
	 and such that $q(x) \neq 0$, then $x$ is undistorted in $|\mathord{\cdot}|_H$.
 \end{lemma}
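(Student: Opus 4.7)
The plan is to use the standard bound relating a word in a generating set to the value of a quasimorphism on that word, then combine it with homogeneity of $q$ applied to powers of $x$.

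First, I would fix a representative $|\mathord{\cdot}|_{\bar{S}}$ of the bi-Lipschitz class $|\mathord{\cdot}|_H$, so it suffices to show $|x^n|_{\bar{S}} \geqslant Cn$ for some $C>0$ and all $n \in \B{Z}$. Let $D$ be the defect of $q$, i.e. $|q(a) - q(ab) + q(b)| < D$ for all $a,b \in G$, and let $M = \sup_{s \in \bar{S}} |q(s)|$, which is finite by the bounded-on-$\bar{S}$ hypothesis. An easy induction on $k$ using the defect inequality gives
$$
\bigl| q(s_1 \cdots s_k) \bigr| \leqslant \sum_{i=1}^{k} |q(s_i)| + (k-1)D \leqslant k(M+D)
$$
for any $s_1,\ldots,s_k \in \bar{S}$.

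Next, for $n \geqslant 1$, write $x^n = s_1 \cdots s_k$ with $s_i \in \bar{S}$ and $k = |x^n|_{\bar{S}}$. Homogeneity of $q$ gives $q(x^n) = n\,q(x)$, so
$$
n|q(x)| = |q(x^n)| \leqslant k(M+D) = |x^n|_{\bar{S}}(M+D).
$$
Rearranging, $|x^n|_{\bar{S}} \geqslant \frac{|q(x)|}{M+D}\, n$, and since $q(x) \neq 0$ this yields the desired linear lower bound with $C = |q(x)|/(M+D) > 0$. The case $n \leqslant 0$ follows from $|x^n|_{\bar{S}} = |x^{-n}|_{\bar{S}}$ together with $q(x^{-1}) = -q(x)$ by homogeneity, so undistortion holds for all $n \in \B{Z}$.

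There is no real obstacle here; the only thing to be careful about is that $\bar{S} = HS$ may be infinite, so one genuinely needs the hypothesis that $q$ is bounded on $\bar{S}$ (not merely on $S$) to ensure $M < \infty$. Everything else is the standard textbook estimate for homogeneous quasimorphisms, transported through the bi-Lipschitz equivalence class defining $|\mathord{\cdot}|_H$.
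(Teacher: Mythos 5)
Your proof is correct and is the standard quasimorphism estimate; the paper itself gives no proof of this lemma (it is quoted from \cite{bm}, Lemma 1.5), and your argument is exactly the expected one: bound $|q|$ on $k$-fold products of elements of $\bar{S}$ by $k(M+D)$ via the defect, then use homogeneity $q(x^n)=nq(x)$ to get the linear lower bound. Your remark that boundedness on all of $\bar{S}=HS$ (not just on $S$) is what makes $M$ finite is precisely the point of the hypothesis, so nothing is missing.
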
 

 \section{Graph products of abelian groups}\label{s:graph.prod}

Right angled Coxeter groups and right angled Artin groups are graph products of abelian groups. 
In Section \ref{s:proof} we prove a more general version of Theorem \ref{t:main} 
which involves all graph products of finitely generated abelian groups. 
This generality allows us to bring right angled Artin and Coxeter groups in a common framework. 

In this section we collect results on graph products of abelian groups and their automorphisms groups.
We mainly follow \cite{cg}.  

Let $\Gamma$ be a simplicial graph, $V$ its vertex set and $E$ its edge set.
Let $\{G_v\}_{v\in V}$ be a collection of finitely generated abelian groups. 
The graph product given by $\Gamma$ and $\{G_v\}_{v\in V}$ is the group 
$$
W(\Gamma,\{G_v\}) = \Asterisk \{G_v\}_{v\in V}/N, 
$$
where $\Asterisk \{G_v\}_{v \in V}$ is the free product of groups $G_v$, 
and $N$ is the normal subgroup generated by $[G_x,G_y]$, $(x,y) \in E$.

Let $W$ be a graph product of finitely generated abelian groups.
A~cyclic group is \textbf{primary} if its order is a power of a prime number.
It is easy to see that there exists a graph $\Gamma$ and a collection $\{G_v\}_{v\in V}$ such that
$W$ is isomorphic to $W(\Gamma,\{G_v\})$ and each $G_v$ is primary or infinite cyclic. 
Thus without loss of generality we may always assume that $W$ is a graph product where each $G_v$ is 
either primary cyclic or infinite cyclic. 

Throughout this section and Section \ref{s:aut^0},  $\Gamma$ is a finite simplicial graph, 
$V$~its vertex set and $\{G_v\}_{v \in V}$ a collection of cyclic groups, each primary or infinite. 
Let $W_\Gamma = W(\Gamma,\{G_v\})$.
For each $v \in V$ we fix a generator of~$G_v$. Abusing notation, we call this generator again by $v$
making no distinction between the generator of $G_v$ and the vertex $v$.

Given a vertex $v \in V$, by $St(v) = \{ w \in V~|~[w,v] = e\}$ denote the star of $v$ in $\Gamma$ and
by $Lk(v) = St(v) - \{v\}$ the link of $v$ in $\Gamma$.
Recall that a preorder is a reflexive and transitive relation. 
We define two preorders on $V$ by:

\begin{center}
\begin{tabular}{ l c r l}
$v \leqslant w  $ & $\iff$ & $Lk(v)$\hspace{-.2cm} & $\subseteq St(w)$,\\
$v \leqslant_s \hspace{-.08cm}w$ & $\iff$ & $St(v)$\hspace{-.2cm} & $\subseteq St(w)$.
\end{tabular}
\end{center}

The preorder $\leqslant$ was already defined in \cite{cv} (see \cite[Lemma 2.1]{cv} for the proof of transitivity of $\leqslant$). 

Below we describe generators of $\OP{Aut}(W_\Gamma)$ \cite{cg}.
They are grouped in four families. 
Since $V$ generates $W_\Gamma$, in the definitions below it amounts to specifying maps from $V$ to $W_\Gamma$. 
In every case it is routine to check that a given map extends to an automorphism. 

\begin{enumerate}
\item Let $\gamma$ be an automorphism of $\Gamma$ such that $\# G_v = \# G_{\gamma(v)}$. 
	The automorphisms of $W$ defined by the permutation of $V$ given by $\gamma$ is called a \textbf{labelled graph automorphism}. 
\item A \textbf{factor automorphism} is an automorphism defined by:
	$$
	\psi_v(z) = 
	\begin{cases}
	v^m & z=v\\
	z & z\neq v,
	\end{cases}
	$$
        where $v \in V$, $m \in \B Z$ and $(m,\# G_v)=1$. Note that if $\#G_v = \infty$, then $m = \pm 1$. 
   
\item 	Let $v,w \in V$ and $v\neq w$. 
	A \textbf{dominated transvection} is an automorphism having one of the following two forms:\\
		$a)$ $\# G_v = \infty$, $v \leqslant w$, and
			$$
			\tau_{v,w}(z) = 
			\begin{cases}
			vw & z=v\\
			\hspace{0.13cm}z & z\neq v,
			\end{cases}
			$$
		$b)$ $\# G_v = p^k$, $\# G_w = p^l$, $v \leqslant_s w$, and
			$$
			\tau_{v,w}(z) = 
			\begin{cases}
			vw^q & z=v\\
			\hspace{0.13cm}z & z\neq v,
        		\end{cases}
			$$
		where $q = max\{1,p^{l-k}\}$ and $p$ is prime. 
\item Let $v \in V$ and $K$ a connected component of $\Gamma - St(v)$. A~\textbf{partial conjugation} is an automorphism defined by:
		$$
		\sigma_{v,K}(z) = 
		\begin{cases}
		vzv^{-1} & z \in K\\
		\phantom{v}z & z\notin K.
		\end{cases}
		$$

\end{enumerate}

The \textbf{pure automorphism group} of $W$, denoted $\OP{\bf Aut}^{\bf 0}(\bf W)$, 
is the subgroup of $\OP{Aut}(W)$ generated by 
factor automorphisms, dominated transvections and partial conjugations. 

\begin{proposition}\label{p:autzero}
$\OP{Aut}^0(W)$ is a normal finite index subgroup in $\OP{Aut}(W)$.
\end{proposition}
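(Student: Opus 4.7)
The plan is to use the explicit generating set for $\OP{Aut}(W)$ already displayed in the excerpt. The group $\OP{Aut}(W)$ is generated by four families: labelled graph automorphisms, factor automorphisms, dominated transvections, and partial conjugations; by definition $\OP{Aut}^0(W)$ is generated by the last three. The labelled graph automorphisms form a subgroup $\Lambda \leqslant \OP{Aut}(W)$ that embeds into $\OP{Aut}(\Gamma)$, and since $\Gamma$ is finite, $\Lambda$ is finite. So the strategy is: establish normality of $\OP{Aut}^0(W)$, then deduce $\OP{Aut}(W) = \Lambda\cdot\OP{Aut}^0(W)$, which bounds the index by $|\Lambda|$.

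For normality, I would check that conjugation by an arbitrary labelled graph automorphism $\gamma$ sends each of the three types of generators of $\OP{Aut}^0(W)$ back into $\OP{Aut}^0(W)$. Specifically, a direct evaluation on the generating set $V$ of $W$ gives
$$\gamma \psi_v \gamma^{-1} = \psi_{\gamma(v)},\qquad \gamma \tau_{v,w}\gamma^{-1} = \tau_{\gamma(v),\gamma(w)},\qquad \gamma \sigma_{v,K}\gamma^{-1} = \sigma_{\gamma(v),\gamma(K)}.$$
The first identity uses that $\#G_v=\#G_{\gamma(v)}$, so the coprimality condition $(m,\#G_v)=1$ passes to the conjugate. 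The second uses that a labelled graph automorphism preserves both preorders $\leqslant$ and $\leqslant_s$ (since it is a graph automorphism preserving vertex-group orders) and, in the primary cyclic case, that the exponent $q=\max\{1,p^{l-k}\}$ depends only on the orders of $G_v,G_w$, which are preserved. The third uses that $\gamma$ sends $\Gamma - St(v)$ to $\Gamma - St(\gamma(v))$ and thus permutes connected components.

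Once normality holds, any word in the generators of $\OP{Aut}(W)$ can be rewritten by sliding every labelled graph automorphism factor to the left at the cost of replacing the intervening $\OP{Aut}^0(W)$-generators by their conjugates, which stay in $\OP{Aut}^0(W)$. Thus $\OP{Aut}(W) = \Lambda \cdot \OP{Aut}^0(W)$, and hence
$$[\OP{Aut}(W):\OP{Aut}^0(W)] \leqslant |\Lambda| \leqslant |\OP{Aut}(\Gamma)| < \infty.$$

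The main obstacle is essentially bookkeeping: one has to confirm that the three conjugation identities above really do map generators to generators of the same form, rather than merely into $\OP{Aut}^0(W)$. The only place where something nontrivial could fail is the dominated transvection case (b), because the exponent $q$ and the preorder $\leqslant_s$ both depend on the labelling data; but since a labelled graph automorphism is precisely required to preserve this data, the identity goes through without issue.
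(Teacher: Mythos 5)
Your proposal is correct and follows essentially the same route as the paper: both arguments use that the labelled graph automorphisms together with $\OP{Aut}^0(W)$ generate $\OP{Aut}(W)$, deduce normality from invariance of $\OP{Aut}^0(W)$ under conjugation by labelled graph automorphisms, and bound the index by the (finite) order of the group of labelled graph automorphisms. The only difference is cosmetic: you verify the conjugation identities on generators explicitly and write $\OP{Aut}(W)=\Lambda\cdot\OP{Aut}^0(W)$, whereas the paper asserts the invariance and phrases the index count via a surjection from the semidirect product $F\ltimes\OP{Aut}^0(W)$.
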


\begin{proof}
Let $F$ be the group generated by labelled graph automorphisms. 
It is shown in \cite{cg} that $\OP{Aut}^0(W)$ and $F$ generate $\OP{Aut}(W)$. 
The group $\OP{Aut}^0(W)$ is invariant under conjugations by elements from $F$, thus $\OP{Aut}^0(W)$ is normal. 
Let $F \ltimes \OP{Aut}^0(W)$ be the semidirect product defined by the conjugation action of $F$ on $\OP{Aut}^0(W)$. 
The natural homomorphism $F \ltimes \OP{Aut}^0(W) \to \OP{Aut}(W)$ is onto.
Since $\OP{Aut}^0(W)$ is finite index in $F \ltimes \OP{Aut}^0(W)$, it is so in $\OP{Aut}(W)$. 
\end{proof}

Due to Proposition \ref{p:autzero} and Lemma \ref{l:equivalent} we have.

\begin{corollary}\label{c:eq}
$\OP{Aut(W)}$-invariant word norm and $\OP{Aut}^0(W)$-invariant word norm are equivalent.
\end{corollary}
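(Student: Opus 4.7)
The plan is to apply Lemma \ref{l:equivalent} directly, taking $H = \OP{Aut}(W)$ and $H_0 = \OP{Aut}^0(W)$. Lemma \ref{l:equivalent} asserts that whenever $H_0 \leqslant H \leqslant \OP{Aut}(G)$ and $H_0$ has finite index in $H$, the two corresponding invariant word norms $|\mathord{\cdot}|_H$ and $|\mathord{\cdot}|_{H_0}$ are bi-Lipschitz equivalent, so the conclusion of the corollary is immediate once the hypotheses are verified.

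First I would check the inclusions: $\OP{Aut}^0(W)$ is by definition a subgroup of $\OP{Aut}(W)$, so the chain $\OP{Aut}^0(W) \leqslant \OP{Aut}(W) \leqslant \OP{Aut}(W)$ trivially satisfies the setup of Lemma \ref{l:equivalent}. Second, I would invoke Proposition \ref{p:autzero}, which has just been established and which tells us that $\OP{Aut}^0(W)$ is a normal finite index subgroup of $\OP{Aut}(W)$. In particular the finite index hypothesis of Lemma \ref{l:equivalent} holds (normality is not needed here).

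With both hypotheses in place, Lemma \ref{l:equivalent} applies and yields that $|\mathord{\cdot}|_{\OP{Aut}(W)}$ and $|\mathord{\cdot}|_{\OP{Aut}^0(W)}$ are bi-Lipschitz equivalent, which is exactly the assertion of Corollary \ref{c:eq}. There is no real obstacle: the corollary is a direct combination of the preceding proposition with the general equivalence lemma, and its purpose is to record that in what follows one may pass freely between the full automorphism group and its pure subgroup when discussing the invariant word norm.
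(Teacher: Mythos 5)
Your proposal is correct and matches the paper exactly: the corollary is stated as an immediate consequence of Proposition \ref{p:autzero} (finite index of $\OP{Aut}^0(W)$ in $\OP{Aut}(W)$) combined with Lemma \ref{l:equivalent}. Nothing further is needed.
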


\section{$\OP{Aut}^0$-invariant subgroups}\label{s:aut^0}


Assume that $G$ is a group, $H \leqslant \OP{Aut}(G)$ and $N \leqslant G$ is an $H$-invariant subgroup. 
Let $p \colon G \to G/N$ be the quotient map and let us define $p_* \colon H \to \OP{Aut}(G/N)$ be $p_*(\psi)(gN) = \psi(g)N$.

Suppose $|\mathord{\cdot}|_i$ is a norm on $G_i$, $i=1,2$.
A map $f \colon G_1 \to G_2$ is Lipschitz with respect to $|\mathord{\cdot}|_1$ and $|\mathord{\cdot}|_2$ if there exists $C \in \B R$ such that
$|f(x)|_2 \leqslant C|x|_1$ for every $x \in G$. 

\begin{lemma}\label{l:lip}
The quotient map $p \colon G \to G/N$ is Lipschitz with respect to $|\mathord{\cdot}|_H$ and $|\mathord{\cdot}|_{p_*(H)}$.
\end{lemma}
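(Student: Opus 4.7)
The plan is to compute the image of a natural generating set under $p$ and show that word length cannot grow under the quotient map, at least for compatible choices of representatives of the bi-Lipschitz equivalence classes on both sides.

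First I would pick a finite $S \subseteq G$ such that $\bar{S} = HS$ generates $G$, so that $|\mathord{\cdot}|_{\bar{S}}$ is a representative of $|\mathord{\cdot}|_H$. Setting $S' = p(S) \subseteq G/N$, observe that
\[
p(\bar{S}) = \{ p(\psi(s)) \mid \psi \in H,\ s \in S \} = \{ p_*(\psi)(p(s)) \mid \psi \in H,\ s \in S \} = p_*(H)\, S' = \overline{S'},
\]
where the middle equality uses the defining property of $p_*$ (which is well defined because $N$ is $H$-invariant). Since $\bar{S}$ generates $G$ and $p$ is surjective, $\overline{S'} = p(\bar{S})$ generates $G/N$, so $|\mathord{\cdot}|_{\overline{S'}}$ is a representative of $|\mathord{\cdot}|_{p_*(H)}$.

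Now if $x \in G$ has $|x|_{\bar{S}} = n$, write $x = s_1 \cdots s_n$ with $s_i \in \bar{S}$. Applying $p$ gives $p(x) = p(s_1) \cdots p(s_n)$ with $p(s_i) \in p(\bar{S}) = \overline{S'}$, hence $|p(x)|_{\overline{S'}} \leqslant n = |x|_{\bar{S}}$. Thus $p$ is $1$-Lipschitz with respect to this particular pair of representatives. Passing from these specific representatives to arbitrary representatives of the bi-Lipschitz equivalence classes $|\mathord{\cdot}|_H$ and $|\mathord{\cdot}|_{p_*(H)}$ changes the constant by a bounded factor, yielding a constant $C$ with $|p(x)|_{p_*(H)} \leqslant C|x|_H$ for all $x \in G$.

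There is no real obstacle here; the only point that requires care is checking that the natural generating set on $G/N$ produced by $p$ is exactly the $p_*(H)$-orbit of a finite set, so that it represents $|\mathord{\cdot}|_{p_*(H)}$ in the sense of the definition given at the start of Section 1. Once that is observed, the Lipschitz inequality is immediate from the fact that $p$ is a group homomorphism and therefore carries words of length $n$ in the generators of $G$ to words of length at most $n$ in their images.
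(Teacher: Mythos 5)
Your proof is correct and follows the same route as the paper: choose a finite $S$ with $\bar{S}=HS$ generating $G$, observe that $p(\bar{S})=p_*(H)p(S)$ generates $G/N$ and hence defines $|\mathord{\cdot}|_{p_*(H)}$, and conclude that $p$ does not increase word length since it carries generators to generators. The paper states this in three lines; your version merely spells out the same verification in more detail.
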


\begin{proof}
Let $S \subseteq G$ be a finite set such that $\bar{S} = HS$ generates $G$. 
Thus $p(\bar{S}) = p_*(H)p(S)$ generates $G/N$ 
and we can use $\bar{S}$ to define $|\mathord{\cdot}|_{H}$ and $p(\bar{S})$ to define $|\mathord{\cdot}|_{p_*(H)}$. 
Since $p$ maps generators to generators, we have that $|\mathord{\cdot}|_{p(\bar{S})} \leq |\mathord{\cdot}|_{\bar{S}}$.
\end{proof}

\begin{corollary}\label{c:undistorted}
Let $N \leqslant W_\Gamma$ be an $\OP{Aut}^0(W_\Gamma)$-invariant subgroup and let $p \colon W_\Gamma \to W_\Gamma/N$.
If $p(x) \in W_\Gamma/N$ is $\OP{Aut}$-undistorted, then $x$ is $\OP{Aut}$-undistorted. 
\end{corollary}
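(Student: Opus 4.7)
The plan is to chain together three previously established facts to transport undistortedness from the quotient back to $W_\Gamma$. First I would invoke Corollary \ref{c:eq} to replace the $\OP{Aut}$-invariant norm on $W_\Gamma$ with the equivalent $\OP{Aut}^0$-invariant norm, so that it suffices to produce a linear lower bound for $|x^n|_{\OP{Aut}^0(W_\Gamma)}$ from the assumed linear lower bound for $|p(x)^n|_{\OP{Aut}(W_\Gamma/N)}$.

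Next, since $N$ is $\OP{Aut}^0(W_\Gamma)$-invariant, every $\psi \in \OP{Aut}^0(W_\Gamma)$ descends to an automorphism of $W_\Gamma/N$, so the map $p_* \colon \OP{Aut}^0(W_\Gamma) \to \OP{Aut}(W_\Gamma/N)$ is well defined and Lemma \ref{l:lip} applies with $H = \OP{Aut}^0(W_\Gamma)$. This yields a constant $C_1$ with
$$
|p(y)|_{p_*(\OP{Aut}^0(W_\Gamma))} \leqslant C_1\, |y|_{\OP{Aut}^0(W_\Gamma)}
$$
for every $y \in W_\Gamma$, and in particular for $y = x^n$.

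To close the loop I would compare $|\mathord{\cdot}|_{p_*(\OP{Aut}^0(W_\Gamma))}$ with $|\mathord{\cdot}|_{\OP{Aut}(W_\Gamma/N)}$ on the quotient. Because $p_*(\OP{Aut}^0(W_\Gamma))$ sits inside $\OP{Aut}(W_\Gamma/N)$, any generating set of the form $p_*(\OP{Aut}^0(W_\Gamma))S$ is contained in $\OP{Aut}(W_\Gamma/N)S$, so the full-$\OP{Aut}$ word norm on $W_\Gamma/N$ is dominated by the $p_*$-norm. Feeding in the hypothesis $|p(x)^n|_{\OP{Aut}(W_\Gamma/N)} > cn$ and reading the chain of inequalities gives $|x^n|_{\OP{Aut}(W_\Gamma)} > c'n$ for a new constant $c' > 0$, which is the required undistortedness of $x$.

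The statement is essentially a formal consequence of Lemma \ref{l:lip}, Corollary \ref{c:eq}, and the monotonicity of $|\mathord{\cdot}|_H$ in $H$; there is no genuine obstacle. The only point that wants care is keeping the direction of the norm comparisons straight: a \emph{larger} automorphism subgroup $H$ admits more generators $HS$ and hence produces a \emph{smaller} word norm, which is exactly what makes the bottom-of-the-chain comparison run the right way.
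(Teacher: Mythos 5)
Your argument is correct and follows essentially the same route as the paper: Corollary \ref{c:eq} to pass to the $\OP{Aut}^0$-norm, Lemma \ref{l:lip} for the Lipschitz property of $p$, and the monotonicity $|\mathord{\cdot}|_{\OP{Aut}(W_\Gamma/N)} \leqslant |\mathord{\cdot}|_{p_*(H)}$ to finish. You merely spell out the chain of inequalities that the paper summarises as ``the pre-image of an undistorted element by a Lipschitz function is undistorted.''
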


\begin{proof}
Let $H = \OP{Aut}^0(W_\Gamma)$. 
We always have $|\mathord{\cdot}|_{Aut(W_\Gamma/N)} \leqslant |\mathord{\cdot}|_{p_*(H)}$.
Due to Lemma \ref{l:lip} and Corollary \ref{c:eq}, $p \colon W_\Gamma \to W_\Gamma/N$ is Lipschitz with respect to 
the $\OP{Aut}$-invariant word norms. The pre-image of an undistorted element by a Lipschitz function is undistorted. 
\end{proof}

Let $W_\Gamma = W(\Gamma,\{G_v\})$ be a graph product of cyclic groups, where each $G_v$ is primary or infinite. 
Given $X \subseteq V$, we define $W_X$ to be the subgroup of $W_\Gamma$ generated by $X$.
The group $W_X$ is called a \textbf{standard subgroup}  of $W_\Gamma$ and is isomorphic to $W(\Gamma(X),\{G_v\}_{v \in X})$,
where $\Gamma(X)$ is the full subgraph of $\Gamma$ spanned by $X$. 

A \textbf{standard retraction} is the map $R_X \colon W_\Gamma \to W_X$ defined by:
	$$
	R_X(z) = 
	\begin{cases}
	z & z \in X\\
	e & z \notin X.
	\end{cases}
	$$
	\vspace{.1cm}
By $K_X$ we denote the kernel of $R_X$.

\begin{lemma}\label{l:factor.partial}
	Let $X \subseteq V$. The group $K_X$ is invariant under factor automorphisms and partial conjugations. 
\end{lemma}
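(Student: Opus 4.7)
The plan is to reduce the invariance statement to checking it on a normal generating set of $K_X$, and the key observation is that $K_X$ is normally generated (in $W_\Gamma$) by the set $V \setminus X$ of ``killed'' vertices. To see this, let $N \trianglelefteq W_\Gamma$ be the normal closure of $V \setminus X$. Clearly $N \subseteq K_X$, since each $v \notin X$ lies in $\ker R_X$ and $K_X$ is normal. Conversely, killing all $v \notin X$ in the presentation of $W_\Gamma$ as a graph product yields precisely the relators of $W_X = W(\Gamma(X),\{G_v\}_{v \in X})$, so $W_\Gamma/N \cong W_X$ in a way that identifies the quotient map with $R_X$; therefore $N = K_X$.

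With this reduction in hand, and since $K_X$ is normal, it suffices to verify that each of the listed automorphisms sends every $u \in V \setminus X$ into $K_X$. I would do the two families separately. For a factor automorphism $\psi_v$: if $u \neq v$ then $\psi_v(u) = u \in K_X$; and if $u = v$ (which forces $v \notin X$) then $\psi_v(v) = v^m \in K_X$. For a partial conjugation $\sigma_{v,K}$: if $u \notin K$ then $\sigma_{v,K}(u) = u \in K_X$; and if $u \in K$ then $\sigma_{v,K}(u) = v u v^{-1}$, which again lies in $K_X$ by normality of $K_X$ (using that $u \in V \setminus X \subseteq K_X$). In all cases the image of $u$ is in $K_X$, so $\phi(K_X) \subseteq K_X$ for each such $\phi$.

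Finally, the inverse of a factor automorphism $\psi_v$ is the factor automorphism $\psi_v^{-1}$ corresponding to the multiplicative inverse of $m$ modulo $\# G_v$ (or $m = \pm 1$ in the infinite case), and the inverse of $\sigma_{v,K}$ is the partial conjugation $\sigma_{v^{-1},K}$ of the same form. Hence the same argument gives $\phi^{-1}(K_X) \subseteq K_X$, and we conclude $\phi(K_X) = K_X$.

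There is no real obstacle here; the only point requiring a small amount of care is the identification $W_\Gamma/N \cong W_X$, which uses that graph products have a presentation whose only relators are the orders of generators and the commutation relations coming from edges of $\Gamma$, both of which descend correctly to the subgraph $\Gamma(X)$.
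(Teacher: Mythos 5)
Your proof is correct, but it takes a genuinely different route from the paper. The paper's proof does not mention normal closures at all: for each factor automorphism or partial conjugation $\psi$ it writes down an automorphism $\psi_0$ of $W_X$ (the identity when the relevant vertex lies outside $X$, the ``restricted'' automorphism otherwise) and checks on generators that $R_X\circ\psi=\psi_0\circ R_X$; invariance of $K_X=\ker R_X$ is then immediate, and the same commuting-square template is reused verbatim for dominated transvections in the next lemma. Your argument instead identifies $K_X$ with the normal closure $\langle\langle V\setminus X\rangle\rangle$ via a presentation argument, reduces invariance to checking the images of the vertices $u\in V\setminus X$, and closes the loop with normality of $K_X$. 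Both are sound. What the paper's approach buys is that it bypasses the identification of $\ker R_X$ as a normal closure entirely (that identification is standard but is the one step in your write-up that genuinely needs the presentation of a graph product) and it produces the induced automorphism of $W_X$ as a byproduct; what your approach buys is that you never have to verify that a candidate $\psi_0$ is a well-defined automorphism, only a finite generator check. Two cosmetic remarks: your notation $\sigma_{v^{-1},K}$ for the inverse of a partial conjugation is an abuse (the listed partial conjugations are indexed by vertices, not by their inverses), though the identical normality argument disposes of $z\mapsto v^{-1}zv$, so nothing is lost; and an odd function should satisfy $f(x^{-1})=-f(x)$, not the garbled condition you copied from the statement of a later lemma --- but that plays no role here.
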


\begin{proof}
	Let $\psi \in \OP{Aut}(W_{\Gamma})$ be a factor automorphism or a partial conjugation. 
	We shall define a map $\psi_0 \in \OP{Aut}(W_X)$ such that the following diagram commutes
$$
\begin{tikzcd}
	W_\Gamma \arrow{r}{R_X} \arrow{d}{\psi} & W_X \arrow{d}{\psi_0}\\
	W_\Gamma \arrow{r}{R_X} & W_X. 
\end{tikzcd}
$$

Then invariance of $K_X$ under $\psi$ follows immediately. 

For $\psi = \phi_v$, a factor automorphism, we define: 
if $v\notin X$, then $\psi_0 = id$; if~$v \in X$, then $\psi_0 = \phi_v \in \OP{Aut}(W_X)$.

For $\psi = \sigma_{v,K}$, a partial conjugation, we define: if $v \notin X$, then $\psi_0 = id$;
if~$v \in X$, then 
	$$
	\psi_0(z) = 
	\begin{cases}
	vzv^{-1} & z \in X\cap K\\
	\phantom{v}z & z \notin X\cap K.
	\end{cases}
	$$

In each case it is clear that $\psi_0 \circ R_X(z) = R_X \circ \psi(z)$ for each $z\in V$, thus the diagram commutes. 
\end{proof}

In general $K_X$ is not invariant under dominated transvections. 
Suppose that there exist $v \notin X$  and $w \in X$ such that $\#G_v=\infty$ and $v \leqslant w$.
Then $\tau_{v,w}$ is well defined and $v \in K_X$, but $\tau_{v,w}(v) = vw \notin K_X$. 
In Lemma \ref{l:lower.cone} we show that existence of such $v$ and $w$ is the only reason why $K_X$ is not invariant under dominated transvections. 

Let $\leqslant_\tau$ be the relation defined on $V$ by 
$$
v \leqslant_\tau w \iff \tau_{v,w}~\text{is well defined,}
$$

i.e.: $v \leqslant_\tau w$ if either a) $\#G_v=\infty$ and $v \leqslant w$ or b) $\#G_v = p^k$, $\#G_w=p^l$ and $v \leqslant_s w$. 

\begin{lemma}
The relation $\leqslant_\tau$ is a partial preorder.
\end{lemma}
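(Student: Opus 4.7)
The plan is a short case analysis. Reflexivity is immediate once we adopt the convention that $v \leqslant_\tau v$ for every $v$ (matching the fact that $v \leqslant v$ and $v \leqslant_s v$ hold tautologically, even though the automorphism $\tau_{v,v}$ itself is not defined in the paper). The rest of the argument concerns transitivity: assuming $u \leqslant_\tau v$ and $v \leqslant_\tau w$ with $u,v,w$ distinct, I want to deduce $u \leqslant_\tau w$.

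Two background facts will drive the bookkeeping. Both $\leqslant$ and $\leqslant_s$ are already preorders on $V$: transitivity of $\leqslant$ is quoted from \cite{cv}, and transitivity of $\leqslant_s$ is the trivial set-theoretic statement $St(u) \subseteq St(v) \subseteq St(w) \Rightarrow St(u) \subseteq St(w)$. Moreover, $\leqslant_s$ refines $\leqslant$, since $St(v) \subseteq St(w)$ implies $Lk(v) \subseteq St(v) \subseteq St(w)$, hence $v \leqslant w$.

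Now split on the order of $G_u$. If $\#G_u = \infty$, then $u \leqslant_\tau v$ must be of type (a), giving $u \leqslant v$. The relation $v \leqslant_\tau w$ is either of type (a), in which case $v \leqslant w$ directly, or of type (b), in which case $v \leqslant_s w$ and hence $v \leqslant w$ by the refinement above. Either way, transitivity of $\leqslant$ produces $u \leqslant w$, and since $\#G_u = \infty$, this is exactly witness (a) for $u \leqslant_\tau w$. If instead $\#G_u = p^k$ is a prime power, then $u \leqslant_\tau v$ must be of type (b); in particular $\#G_v = p^l$ is a power of the same prime $p$ and $u \leqslant_s v$. The finiteness of $G_v$ rules out type (a) for $v \leqslant_\tau w$, so $\#G_w$ is also a power of $p$ and $v \leqslant_s w$. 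Transitivity of $\leqslant_s$ then gives $u \leqslant_s w$, which is witness (b) for $u \leqslant_\tau w$.

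The main (and only) subtlety is the mixed situation where $\#G_u = \infty$ while $\#G_v$ is finite: here the types witnessing $u \leqslant_\tau v$ and $v \leqslant_\tau w$ differ, and one must convert a $\leqslant_s$-hypothesis into a $\leqslant$-hypothesis. This is precisely what the refinement $\leqslant_s \Rightarrow \leqslant$ is there for, and it is the one step that is easy to overlook; once it is in hand, all the combinations collapse cleanly and no compatibility on the prime $p$ is ever violated.
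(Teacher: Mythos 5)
Your proof is correct and is essentially an expanded version of the paper's one-line argument, which simply appeals to the fact that $\leqslant$ and $\leqslant_s$ are preorders. The only detail you add beyond what the paper states is the bridging observation $\leqslant_s \Rightarrow \leqslant$ (from $Lk(v) \subseteq St(v)$) needed in the mixed case $\#G_u = \infty$, $\#G_v$ finite --- a point the paper leaves implicit but which your write-up rightly identifies as the one nontrivial step.
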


\begin{proof}\label{l:taupreorder}
It follows from the fact that $\leqslant$ and $\leqslant_s$ are partial preorders.
\end{proof}

Let $Y$ be a set and $\leqslant$ a relation on $Y$. A subset $X$ of $Y$ is called a~\textbf{lower cone}
if for every $t \in X$ and $s \in Y$ such that
$s \leqslant t$, we have $s \in X$. 

\begin{lemma}\label{l:lower.cone}
If $X \subseteq V$ is a lower cone with respect to $\leqslant_\tau$,
then $K_X$ is $\OP{Aut}^0(W_{\Gamma})$-invariant.
\end{lemma}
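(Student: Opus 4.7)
The plan is to mimic the proof of Lemma \ref{l:factor.partial}. Since $\OP{Aut}^0(W_\Gamma)$ is generated by factor automorphisms, partial conjugations, and dominated transvections, and Lemma \ref{l:factor.partial} already handles the first two families, it suffices to show that $K_X$ is invariant under every dominated transvection $\tau_{v,w}$. For this I would again produce an automorphism $\psi_0 \in \OP{Aut}(W_X)$ making the square $\psi_0 \circ R_X = R_X \circ \tau_{v,w}$ commute; applied to any $x \in K_X$ this forces $R_X(\tau_{v,w}(x)) = \psi_0(e) = e$.

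The construction of $\psi_0$ splits into three cases, according to whether $v$ and $w$ lie in $X$. If both $v,w \in X$, then $\tau_{v,w}$ restricts to a dominated transvection on $W_X$: the relation $v \leqslant_\tau w$ passes to the induced subgraph $\Gamma(X)$ because $Lk_{\Gamma(X)}(v) = Lk_\Gamma(v) \cap X$ and $St_{\Gamma(X)}(w) = St_\Gamma(w) \cap X$, so each of $\leqslant$ and $\leqslant_s$ survives restriction. Hence one sets $\psi_0 := \tau_{v,w}|_{W_X}$. If $v \in X$ and $w \notin X$, then on the generator $v$ one computes $R_X(\tau_{v,w}(v)) = R_X(vw^q) = v = R_X(v)$ (with $q=1$ in the infinite case), so $\psi_0 := \OP{id}$ works.

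The critical case is $v \notin X$. Here the lower cone hypothesis enters decisively: since $v \leqslant_\tau w$, membership $w \in X$ would force $v \in X$ by closure under $\leqslant_\tau$, contradicting our assumption. Therefore $w \notin X$ as well, so $R_X(\tau_{v,w}(v)) = R_X(vw^q) = e = R_X(v)$, and again $\psi_0 := \OP{id}$ makes the diagram commute on generators, and hence on all of $W_\Gamma$.

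The only genuinely new ingredient compared to Lemma \ref{l:factor.partial} is identifying the potential obstruction flagged in the paragraph preceding the statement---namely the configuration $v \notin X$, $w \in X$ with $v \leqslant_\tau w$---and observing that this is precisely the configuration the lower cone condition excludes. Everything else is routine diagram chasing on the generating set $V$, so I expect no real obstacle beyond this observation.
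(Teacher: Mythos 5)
Your proposal is correct and follows essentially the same route as the paper: reduce to dominated transvections via Lemma \ref{l:factor.partial}, build $\psi_0$ making the retraction square commute, and use the lower cone condition to rule out the configuration $v \notin X$, $w \in X$. The paper organizes the cases by whether $w \in X$ (which collapses your three cases into two), but the content is identical; your extra check that $v \leqslant_\tau w$ survives restriction to $\Gamma(X)$ is a detail the paper leaves implicit.
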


\begin{proof}
	Due to Lemma \ref{l:factor.partial} it is enough to show that $K_X$ is invariant under dominated transvections.
	Let $\tau_{v,w} \in \OP{Aut}(W_\Gamma)$ be a dominated transvection. By definition $v \leqslant_\tau w$. 
	We follow the strategy of Lemma \ref{l:factor.partial}.
	If $w \notin X$ then we define $\psi_0 = id$.
	If $w \in X$, then $v \in X$ since $X$ is a lower cone. 
	We define $\psi_0 = \tau_{v,w} \in \OP{Aut}(W_X)$.
\end{proof}

\begin{corollary}\label{c:transfer}
Let $X \subseteq V$ be a lower cone and let $R_X \colon W_\Gamma \to W_X$ be a standard retraction. 
If $R_X(x) \in W_X$ is  $\OP{Aut}$-undistorted, then $x \in W_\Gamma$ is $\OP{Aut}$-undistorted. 
\end{corollary}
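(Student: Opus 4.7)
The plan is to read Corollary \ref{c:transfer} as a direct application of the previously established Corollary \ref{c:undistorted}, with the $\OP{Aut}^0(W_\Gamma)$-invariant normal subgroup $N$ taken to be $K_X$ and the quotient map $p$ identified with the retraction $R_X$. So the proof really just needs to assemble the ingredients already in place.

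First I would verify the hypothesis of Corollary \ref{c:undistorted}: since by assumption $X$ is a lower cone for the preorder $\leqslant_\tau$, Lemma \ref{l:lower.cone} immediately gives that $K_X \leqslant W_\Gamma$ is $\OP{Aut}^0(W_\Gamma)$-invariant. Next I would identify the quotient appearing in Corollary \ref{c:undistorted} with the target of the retraction. By construction, $R_X$ is a homomorphism whose restriction to $W_X$ is the identity, so $R_X$ is surjective, and $K_X$ is its kernel by definition. The first isomorphism theorem then yields a canonical isomorphism $W_\Gamma/K_X \xrightarrow{\sim} W_X$, under which the quotient map $p \colon W_\Gamma \to W_\Gamma/K_X$ corresponds to $R_X \colon W_\Gamma \to W_X$.

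Finally, under this identification the hypothesis that $R_X(x) \in W_X$ is $\OP{Aut}$-undistorted translates verbatim into the hypothesis of Corollary \ref{c:undistorted} that $p(x) \in W_\Gamma/K_X$ is $\OP{Aut}$-undistorted. That corollary then concludes that $x$ is $\OP{Aut}$-undistorted in $W_\Gamma$.

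There is no serious obstacle here; the only thing to be careful about is that Corollary \ref{c:undistorted} is formulated for an abstract quotient $W_\Gamma/N$ while the statement of Corollary \ref{c:transfer} works with the concrete model $W_X$ via the retraction, so I would want to make the isomorphism $W_\Gamma/K_X \cong W_X$ explicit to avoid any ambiguity about what "$\OP{Aut}$-undistorted" means in each context (the two norms coincide under the isomorphism because $\OP{Aut}$-invariant word norms are intrinsic to the isomorphism class of the group).
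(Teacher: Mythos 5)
Your proposal is correct and matches the paper's own argument exactly: the paper likewise invokes Lemma \ref{l:lower.cone} to get $\OP{Aut}^0(W_\Gamma)$-invariance of $K_X = \ker(R_X)$ and then applies Corollary \ref{c:undistorted}. You merely spell out the identification $W_\Gamma/K_X \cong W_X$ more explicitly than the paper does.
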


\begin{proof}
	$K_X=Ker(R_X)$ is $\OP{Aut}^0(W_\Gamma)$-invariant. We apply Corollary~\ref{c:undistorted}.
\end{proof}
Denote
$$
v \sim_\tau w \iff v \leqslant_\tau w~\text{and}~w \leqslant_\tau v.
$$

Since $\leqslant_\tau$ is a partial preorder, it defines a partial order on equivalence classes of $\sim_\tau$.
We denote this partial order again by $\leqslant_\tau$. 
Equivalence classes of $\sim_\tau$ fall into 3 types which we describe below. 

Let $X \subseteq V$ be an equivalence class of $\sim_\tau$. 

If for some $v \in X$ we have $\#G_v = \infty$, then $\#G_x = \infty$ for every $x \in X$. 
If there exist $v,w \in X$, $v\neq w$ such that $w$ and $v$ commute, then every two elements 
of $X$ commute. In this case $W_X$ is free abelian.
If there exist $v,w \in X$ such that $w$ and $v$ do not commute, then every two elements of $X$ do not commute. 
In this case $W_X$ is a~free group.

If for some $v \in X$ we have $\#G_v = p^k$, then $\#G_x = p^{l_x}$ for some $l_x \in \B N$ for every $x \in X$. 
Note that in this case $W_X$ is always abelian and finite. 

If $X_1$ and $X_2$ are equivalence classes and there exist $v_1 \in X_1$ and $v_2 \in X_2$ such that 
$v_1$ and $v_2$ commute, then every element of $X_1$ commute with every element of $X_2$. In this case we say that $X_1$ and $X_2$ commute. 


\section{Proof of the main theorem}\label{s:proof}


In this section we prove the following theorem which is a generalisation of Theorem \ref{t:main} from the introduction. 

\begin{theorem}\label{t:main2}
Let $\Gamma$ be a finite graph, $V$ its vertex set and let $\{G_v\}_{v\in V}$ be a family of finitely generated abelian groups. 
The $\OP{Aut}$-invariant word norm on $W_\Gamma$ is bounded if and only if $W_\Gamma = Z^n \times D_\infty^m \times F$,
where $n \neq 1$ and $F$ is finite. 
If the $\OP{Aut}$-invariant word norm on $W_\Gamma$ is unbounded, then $W_\Gamma$ has $\OP{Aut}$-undistorted elements.
\end{theorem}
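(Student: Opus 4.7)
The proof splits into a bounded direction and an unbounded direction, of which the bounded one is routine. For the bounded direction, suppose $W_\Gamma \cong Z^n \times D_\infty^m \times F$ with $n \neq 1$ and $F$ finite. The plan is to invoke the existing boundedness results factorwise, namely Lemma \ref{l:oneclass} for $Z^n$, Lemma \ref{l:dihedral} for $D_\infty$, and triviality for the finite factor $F$, and then observe that $\OP{Aut}(A)\times \OP{Aut}(B) \hookrightarrow \OP{Aut}(A\times B)$ acting componentwise guarantees that the $\OP{Aut}$-invariant norm of a direct product is bounded by the sum of the factor norms. Boundedness therefore propagates to $W_\Gamma$.

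For the unbounded direction, assume $W_\Gamma$ is not of the above form, in the normalization of Section \ref{s:graph.prod} where each $G_v$ is primary or infinite cyclic. The plan is to locate either a lower cone $X \subseteq V$ for the preorder $\leqslant_\tau$ whose retract $W_X$ contains an $\OP{Aut}$-undistorted element, so Corollary \ref{c:transfer} applies, or an $\OP{Aut}^0(W_\Gamma)$-invariant normal subgroup $N$ whose quotient $W_\Gamma/N$ contains an $\OP{Aut}$-undistorted element, so Corollary \ref{c:undistorted} applies. In either case, undistortion in the target group is certified by a homogeneous quasimorphism bounded on an orbit-generating set and nonzero on the element, via Lemma \ref{l:qm.undist}.

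The content of the unbounded direction lies in a case analysis along the trichotomy of $\sim_\tau$-classes recorded at the end of Section \ref{s:aut^0}. I would show that the failure of $W_\Gamma$ to have the form $Z^n \times D_\infty^m \times F$ forces one of the following minimal obstructions, each admitting an explicit unbounded quasimorphism: a nonabelian-free $\sim_\tau$-class (retract to $F_k$, use Brooks counting quasimorphisms); an isolated single infinite-cyclic class not absorbed into a $Z^{n}$ direct factor with $n \geq 2$ (retract to $Z$, use the abelianisation homomorphism); or a primary-cyclic class whose commutation pattern forces a free product $C_p \ast C_q$ with $(p,q) \neq (2,2)$, handled by Lemma \ref{l:rolli}-type quasimorphisms as in the $PSL_2(Z)$ example. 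Commuting interactions between classes of the allowed types merely produce direct product factors and contribute to the $Z^n \times D_\infty^m \times F$ skeleton rather than obstructions.

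\textbf{Main obstacle.} The principal difficulty is the combinatorial bookkeeping: one must verify that in every $W_\Gamma$ outside the exceptional family, the chosen subset $X$ is in fact a lower cone for $\leqslant_\tau$ (so that Lemma \ref{l:lower.cone} yields $\OP{Aut}^0$-invariance of $K_X$), or that the chosen normal subgroup is $\OP{Aut}^0$-invariant. This reduces to tracking the interactions between dominated transvections and the $\sim_\tau$-class structure, and ruling out hidden automorphisms that could twist vertices across classes. The quasimorphism constructions themselves are relatively standard once the correct retract or quotient is pinned down.
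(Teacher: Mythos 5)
Your bounded direction and your general strategy (retract onto lower cones via Corollary \ref{c:transfer}, or pass to $\OP{Aut}^0$-invariant quotients via Corollary \ref{c:undistorted}, then certify undistortion with Lemma \ref{l:qm.undist}) match the paper. But your list of ``minimal obstructions'' in the unbounded direction is not exhaustive, and this is a genuine gap rather than bookkeeping. Consider $W_\Gamma = Z^2 \ast C_3$ or $W_\Gamma = C_2 \ast (C_2 \times C_2)$. Neither is of the form $Z^n \times D_\infty^m \times F$ with $n\neq 1$, yet neither contains a nonabelian free $\sim_\tau$-class, nor an isolated infinite-cyclic class retracting onto $Z$, nor any standard retract of the form $C_p \ast C_q$ with $(p,q)\neq(2,2)$ (in the second example every vertex group is $C_2$, so every two-vertex retract is $C_2\times C_2$ or $D_\infty$, both bounded). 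These groups are exactly the cases the paper's Lemma \ref{l:free.product} exists to handle: a free product $W_M \ast (Z^n\times D_\infty^m\times F)$ in which no proper retract or quotient sees the unboundedness, so one must work with the free product structure of the whole group.

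The mechanism you are missing has three parts. First, the paper runs an induction on the number of $\sim_\tau$-classes: stripping a maximal class $M$ and applying the inductive hypothesis to $W_{V-M}$ is what lets one assume $W_{V-M} = Z^n\times D_\infty^m\times F$ and hence reduce to the free-product configuration of Lemma \ref{l:free.product} (after splitting off the part of $V-M$ commuting with $M$). Second, the key structural observation there is that when $W_M$ itself has no undistorted elements, $M$ is simultaneously maximal and minimal for $\leqslant_\tau$, so no dominated transvection crosses between $M$ and $V-M$; consequently the $\OP{Aut}^0$-orbit of the vertex generating set lies inside the set of conjugates of elements of $W_M \cup W_{V-M}$. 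This is what makes a split quasimorphism on the free product bounded on the orbit --- it is not an invariance statement about a subgroup or a lower cone, so neither Corollary \ref{c:transfer} nor Corollary \ref{c:undistorted} applies, and your two proposed certification routes do not cover it. Third, the subcase $C_2^{k_1}\ast C_2^{k_2}$ cannot be handled by split quasimorphisms at all (every bounded odd function on $C_2^k$ vanishes) and requires Calegari's theorem for non-elementary hyperbolic groups; and the terminal case $W_M = W_L = C_2$ needs the separate argument that the extra $C_2$ sits inside the finite factor $F$, producing a new $D_\infty$ direct factor rather than an obstruction. Without these ingredients your case analysis does not close.
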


The proof of Theorem \ref{t:main2} is by induction on the number of equivalence classes~of~$\sim_\tau$ and
it is preceded by a number of lemmata.
Lemma \ref{l:dihedral} and Lemma \ref{l:oneclass} are used for the basis of induction.
In Lemma \ref{l:rolli} we introduce quasimorphisms we use in the proof.
Finally, in Lemma \ref{l:free.product} we show the main technical result needed for the inductive step. 

\begin{lemma}\label{l:dihedral}
The $\OP{Aut}$-invariant word norm on the infinite dihedral group $D_\infty$ is bounded. 
\end{lemma}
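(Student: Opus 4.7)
The plan is to exhibit a finite set $S \subseteq D_\infty$ whose $\OP{Aut}(D_\infty)$-orbit $\bar{S}$ generates $D_\infty$, and then show that every element of $D_\infty$ is expressible as a product of at most two elements of $\bar{S}$. This will show $|\mathord{\cdot}|_{\bar{S}}$, hence $|\mathord{\cdot}|_{\OP{Aut}}$, is bounded by $2$.

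Write $D_\infty = \langle a, b \mid a^2 = b^2 = e\rangle$ and take $S = \{a\}$. The first step is to identify the orbit $\bar{S} = \OP{Aut}(D_\infty) \cdot a$. Inner automorphisms already produce the whole conjugacy class of $a$ in $D_\infty$ (and likewise that of $b$); moreover there is an outer automorphism swapping $a$ and $b$, so $b \in \bar{S}$. I would then observe that every element of order $2$ in $D_\infty$ is conjugate to either $a$ or $b$, and conclude that $\bar{S}$ is exactly the set of reflections (order-$2$ elements) of $D_\infty$. Since $a,b \in \bar{S}$, the set $\bar{S}$ generates $D_\infty$.

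Next I would decompose an arbitrary $x \in D_\infty$. If $x = e$, it is the empty product. If $x$ is a reflection, then $x \in \bar{S}$, giving $|x|_{\bar{S}} \leqslant 1$. Otherwise $x$ is a non-trivial translation, i.e., lies in the unique index-$2$ infinite cyclic subgroup generated by $ab$. For any fixed reflection $r \in \bar{S}$, the product $xr$ lies outside this translation subgroup, hence is itself a reflection, so $xr \in \bar{S}$ and $x = (xr)\cdot r \in \bar{S}\cdot\bar{S}$, giving $|x|_{\bar{S}} \leqslant 2$.

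The only subtlety is confirming that the outer automorphism swapping $a \leftrightarrow b$ really exists and that together with inner automorphisms it puts all reflections in a single orbit, but this is routine from the presentation $\langle a,b \mid a^2 = b^2 = e\rangle$.
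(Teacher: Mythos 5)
Your proof is correct and follows essentially the same route as the paper: both identify the $\OP{Aut}$-orbit of a generator with the set of conjugates of $a$ and $b$ (the reflections) and then write an arbitrary element as a product of at most two such elements. The only cosmetic difference is that you start from $S=\{a\}$ and invoke the swap automorphism to reach $b$, whereas the paper takes $S=\{a,b\}$ and argues via the alternating normal form; the content is identical.
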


\begin{proof}
Let $D_\infty = \langle a,b~|~a^2, b^2 \rangle$, $S = \{a,b\}$ and $\bar{S} = \OP{Aut}(D_\infty)S$. 
Let $w \in D_\infty$. Then $w$ is an alternating product of $a$ and $b$. If the length of this product is odd, then 
$w$ is a conjugate of $a$ or $b$, and $w \in \bar{S}$. If the length is even, then $w$ is a product of $a$ or $b$ and 
a conjugate of $a$ or $b$, then $|w|_{\bar{S}} \leqslant 2$. 
\end{proof}

\begin{lemma}\label{l:oneclass}
Let $\Gamma$ be a finite graph, $V$ its vertex set and $\{G_v\}_{v\in V}$ a~family of primary or infinite cyclic groups. 
Assume that $\sim_\tau$ has only one equivalence class. 
Then there are three cases: a) $W_\Gamma = F_n$, a free group,
b) $W_\Gamma = Z^n$, $n>1$, c) $W_\Gamma$ is finite. 
In case a) $W_\Gamma$ has $\OP{Aut}$-undistorted elements, and in cases b) and c) the $\OP{Aut}$-invariant word norm is bounded. 
\end{lemma}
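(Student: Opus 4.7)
The plan is to dispatch the three cases a), b), c) directly from the classification of $\sim_\tau$ equivalence classes given at the end of Section \ref{s:graph.prod}. Since $\Gamma$ has a single $\sim_\tau$-class, that discussion tells us exactly: either all $G_v$ are infinite cyclic with every pair of vertices non-commuting (giving $W_\Gamma = F_n$, case a), or all $G_v$ are infinite cyclic with every pair commuting (giving $W_\Gamma = \B Z^n$, case b), or all $G_v$ are primary cyclic, in which case $W_\Gamma$ is a finite abelian group (case c). So no new structural work is needed; we just verify the norm statement in each case.

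Case c) is immediate: a finite group has bounded word norm in any generating set, hence bounded $\OP{Aut}$-invariant word norm. For case b) with $n \geqslant 2$, pick $S = \{e_1\}$ so that $\bar S = GL_n(\B Z)\cdot e_1$ is exactly the set of primitive vectors in $\B Z^n$. Given $0\neq v \in \B Z^n$, write $v = d u$ with $u$ primitive, and apply an element of $GL_n(\B Z)$ carrying $u$ to $e_1$ to reduce to showing $(d,0,\ldots,0)$ is a bounded $\bar S$-product. The identity
$$
(d,0,\ldots,0) = (d,1,0,\ldots,0) + (0,-1,0,\ldots,0)
$$
exhibits it as a sum of two primitive vectors, so $|v|_{\bar S}\leqslant 2$ for every $v\neq 0$ and the $\OP{Aut}$-invariant norm is bounded.

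Case a) is where the real content sits. For $n=1$ the group $W_\Gamma = \B Z$ has $\OP{Aut}(\B Z) = \{\pm 1\}$ finite, so the $\OP{Aut}$-invariant word norm is equivalent to the ordinary word norm and $1\in\B Z$ is trivially undistorted. For $n\geqslant 2$, I would invoke Lemma \ref{l:qm.undist}: it suffices to exhibit a homogeneous quasimorphism $q\colon F_n\to\B R$ which is bounded on the orbit $\bar S = \OP{Aut}(F_n)\{x_1,\ldots,x_n\}$ (i.e.\ on primitive elements) and non-zero somewhere. This is precisely the content of the constructions in \cite{bst}; alternatively one can appeal to Lemma \ref{l:rolli} below, which packages Rolli-type quasimorphisms on free products into exactly this setting. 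Once such a $q$ is produced and $x\in F_n$ is chosen with $q(x)\neq 0$, Lemma \ref{l:qm.undist} certifies that $x$ is $\OP{Aut}$-undistorted.

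The only real obstacle is therefore supplying the quasimorphism in case a), and even this is essentially a citation: the Brooks-/Rolli-type construction on $F_n$ gives homogeneous quasimorphisms which are bounded on every $\OP{Aut}(F_n)$-orbit of a single element, since automorphisms of $F_n$ preserve primitivity and a primitive element contributes boundedly to such a counting quasimorphism. So the whole proof is a three-way case split followed, in the one non-trivial case, by a direct invocation of Lemma \ref{l:qm.undist} with a quasimorphism either cited from \cite{bst} or extracted from Lemma \ref{l:rolli}.
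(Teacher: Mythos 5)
Your proposal is correct and follows essentially the same route as the paper: the three-way case split from the classification of $\sim_\tau$-classes, the ``sum of two primitive vectors'' trick for $\B Z^n$, and the citation of \cite{bst} together with Lemma \ref{l:qm.undist} for the free group case. One caution: your aside that Lemma \ref{l:rolli} could substitute for the \cite{bst} citation does not work --- a split quasimorphism $\sigma_1\ast\sigma_2$ is bounded only on $G_1\cup G_2$ (and its homogenisation on their conjugates), whereas here you need boundedness on \emph{all} primitive elements of $F_n$, and a primitive element can have an arbitrarily long normal form in a splitting $F_n = G_1 \ast G_2$, so it need not contribute boundedly; the existence of quasimorphisms bounded on the whole set of primitives is the genuine content of \cite{bst}.
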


\begin{proof}
	a) $W_\Gamma = F_n$. If $n=1$, then $\OP{Aut}(W_\Gamma) = \{\pm 1\}$ and clearly every non-trivial  element of $Z$ 
is $\OP{Aut}$-undistorted. 
If $n>2$, let $S = \{x\}$ where $x \in F_n$ is a base element. 
The set $\bar{S} = \OP{Aut}(W_\Gamma)S$ is the set of all primitive elements of $F_n$
and it generates $F_n$. 
There exist non-zero homogeneous quasimorphisms 
on $F_n$, which are bounded on $\bar{S}$ \cite{bst}. Thus by Lemma \ref{l:qm.undist}, $F_n$ has $\OP{Aut}$-undistorted elements. 

b) $W_\Gamma = Z^n$, $n>1$. For simplicity suppose $n=2$. If $n>2$ the proof goes along the same lines. 
Let $S~=~\{(1,0)\}$, then 
$$\bar{S} = \OP{Aut}(Z^2)S = \{ (a,b)~|~a,b~\text{are relatively prime} \}.$$

For $(m,n) \in Z^2$, we have $(m,n) = (1,n-1)+(m-1,1)$, thus $|(m,n)|_{\bar{S}}~\leqslant~2$. 
\end{proof}

\begin{lemma}\label{l:rolli}
	Let $G_1$ and $G_2$ be finitely generated groups. 
	Suppose that $G = G_1 \ast G_2$, $G_1 \neq {e}$, $G_2 \neq {e}$  and $G \neq D_\infty$. 
There exists a non-zero homogeneous quasimorphism on $G$ bounded on $G_1\cup G_2$. 
\end{lemma}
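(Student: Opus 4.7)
The plan is to split into two cases, according to whether at least one of $G_1,G_2$ contains an element of order different from $2$.

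\emph{Case 1: Without loss of generality $G_1$ has some $a$ with $a \neq a^{-1}$.} Following Rolli, I would define an antisymmetric bounded function $\phi_1 \colon G_1 \to \B R$ supported on $\{a,a^{-1}\}$ with $\phi_1(a)=1$, and set $\phi_2 \equiv 0$. Writing each $g \in G$ in its unique normal form as an alternating product of letters from $G_1 \setminus \{e\}$ and $G_2 \setminus \{e\}$, let $q(g)$ be the sum of $\phi_1$ and $\phi_2$ over those letters. A short computation gives a bounded defect: in a product $gh$, the $k$ cancelling pairs each pair $x$ with $x^{-1}$ and contribute $\phi_1(x)+\phi_1(x^{-1})=0$ by antisymmetry, so the only contribution to the defect comes from at most one final ``merger'' step and is bounded by $3\|\phi_1\|_\infty$. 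Trivially $|q(g)|\leqslant 1$ for $g \in G_1\cup G_2$. Picking any $b \in G_2 \setminus \{e\}$, the element $w=ab$ satisfies $q(w^n)=n$ since $(ab)^n=abab\cdots ab$ is already in normal form, so $\bar q(w)=1 \neq 0$.

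\emph{Case 2: Every non-identity element of $G_1$ and $G_2$ has order $2$.} Then $G_i \cong (\B Z/2)^{k_i}$, and the exclusion $G \neq D_\infty$ forces $k_1+k_2\geqslant 3$; say $k_1\geqslant 2$. I pick three distinct non-identity elements $a_1,a_2,a_3 \in G_1$ (for example $a_3=a_1a_2$) and $b \in G_2 \setminus \{e\}$, and form the word $u=a_1ba_2ba_3b$. This $u$ is cyclically reduced in $G_1 \ast G_2$ and, I would check, not conjugate to $u^{-1}=ba_3ba_2ba_1$: in a free product, conjugate cyclically reduced words are cyclic rotations of one another, but the ordered triple $(a_3,a_2,a_1)$ is not a cyclic shift of $(a_1,a_2,a_3)$ when the three elements are distinct. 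A Brooks-type counting quasimorphism $q_u(g)=\#\{\text{copies of }u\text{ in the normal form of }g\}-\#\{\text{copies of }u^{-1}\}$ then has bounded defect, and its homogenisation satisfies $\bar q_u(u)>0$; since $u$ has length $6$, $q_u$ vanishes on every single-letter element, so it is bounded on $G_1 \cup G_2$. Conceptually, one can alternatively note that $G$ is then virtually a non-abelian free group and thus carries non-trivial homogeneous quasimorphisms, which vanish automatically on the torsion set $G_1\cup G_2$.

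The main obstacle is Case~2: the antisymmetric Rolli construction collapses because $\phi(x)=-\phi(x^{-1})=-\phi(x)$ forces $\phi \equiv 0$, so one must invoke a Brooks-style substitute. The condition $G \neq D_\infty$ is used precisely to guarantee enough letters to build a cyclically reduced word not conjugate to its inverse; without it (that is, for $D_\infty$) every cyclically reduced word is a palindrome up to rotation and the construction fails, matching the fact that $D_\infty$ indeed has bounded $\OP{Aut}$-invariant norm by Lemma~\ref{l:dihedral}.
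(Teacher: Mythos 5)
Your proof is correct, and Case 1 is essentially identical to the paper's argument: the paper also reduces to the dichotomy ``some factor is not of the form $C_2^k$'' versus ``both factors are elementary abelian $2$-groups,'' and in the first case builds exactly the same Rolli split quasimorphism from bounded odd functions on the factors, evaluated on normal forms. The difference is in Case 2. There the paper simply observes that $C_2^{k_1}\ast C_2^{k_2}$ with $k_1>1$ or $k_2>1$ is non-elementary word hyperbolic and invokes Calegari's Theorem A$'$ to produce a non-zero homogeneous quasimorphism (which, as you note, is automatically zero on the torsion set $G_1\cup G_2$); your closing ``conceptual'' remark is precisely this route. Your primary Case 2 argument via the explicit Brooks-type word $u=a_1ba_2ba_3b$ is more constructive and self-contained in spirit, and your verification that $u$ is cyclically reduced and not conjugate to $u^{-1}$ (hence $\bar q_u(u)=1$, since no rotation of $(a_1,a_2,a_3)$ equals $(a_3,a_2,a_1)$ for distinct $a_i$) is sound. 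The one step you assert rather than prove is that counting quasimorphisms have bounded defect on a free product of finite groups, not just on a free group; this is true but is exactly the kind of fact that needs either a careful normal-form argument (handling the single possible ``merger'' letter at the junction, as in your Case 1) or a citation, so in a written version you should either supply that or fall back on the hyperbolicity argument, which is what the paper does. Your explanation of why $D_\infty$ must be excluded (every cyclically reduced element is conjugate to its inverse, and indeed $D_\infty$ is virtually $\mathbf{Z}$, hence has no non-zero homogeneous quasimorphisms) correctly identifies where the hypothesis enters.
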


\begin{proof}
A function $f \colon G \to \B R$ is odd if $f(x^{-1}) = f(x)^{-1}$ for every $x \in G$. 
Note that if $G \neq C_2^k$, where $C_2$ is the group of order $2$, then there exists a non-zero bounded odd function on $G$. 
Indeed, if $G$ is finitely generated and not isomorphic to $C_2^k$, 
then $G$ has a nontrivial element of order different from $2$. 

Assume now that $G_1$ or $G_2$ is not of the form $C_2^k$. 

Below we describe the construction of so called split quasimorphisms \cite[Section 3.2]{pr}.
Let $\sigma_1 \colon G_1 \to \B R$ and $\sigma_2 \colon G_2 \to \B R$ be bounded odd functions 
such that one of them is non-zero. Let $x \in G$ and let $x~=~x_1x_2\ldots x_n$
be the normal form of $x$ in the free product, i.e. $x_i \in G_{s(i)}$ and $s(i) \neq s(i+1)$. Then 
$$
\sigma_1\ast \sigma_2(x) = \sigma_{s(1)}(x_1) + \ldots + \sigma_{s(n)}(x_n)
$$

is an unbounded quasimorphism. 
It follows that the homogenisation of $\sigma_1 \ast \sigma_2$ is non-zero and is bounded on $G_1\cup G_2$.

If $G = C_2^{k_1} \ast C_2^{k_2}$ and $k_1>1$ or $k_2>1$, then $G$ is non-elementary word hyperbolic, 
and the lemma follows from \cite[Theorem A']{calegari}. 
\end{proof}

\begin{lemma}\label{l:free.product}
Let $\Gamma$ be a finite graph, $V$ its vertex set and $\{G_v\}_{v\in V}$ a family of primary or infinite  cyclic groups. 
Let $M \subseteq V$ be an equivalence class of $\sim_\tau$. 
Suppose $W_\Gamma = W_M \ast (Z^n \times D_\infty^m \times F)$,
where $n \neq 1$ and $F$ is finite. Then $W_\Gamma$ has $\OP{Aut}$-undistorted elements,
provided that $W_\Gamma \neq D_\infty$.
\end{lemma}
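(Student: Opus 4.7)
The argument splits on the nature of the $\sim_\tau$-class $W_M$. As a common first step, observe that $M$ is a lower cone for $\leqslant_\tau$: since no edge of $\Gamma$ joins $M$ to $V\setminus M$ in the free product decomposition, the relation $u\leqslant_\tau v$ with $u\in V\setminus M$, $v\in M$ forces (case (b) failing immediately because $u\in St(u)\subseteq St(v)\subseteq M$ contradicts $u\notin M$) that $\#G_u=\infty$ and $Lk(u)=\emptyset$, i.e.\ $u$ is an isolated $Z$-vertex. The direct product structure of $H=Z^n\times D_\infty^m\times F$ admits no such vertex unless $H=Z$, which is excluded by $n\neq 1$. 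By Lemma~\ref{l:lower.cone}, $K_M$ is therefore $\OP{Aut}^0(W_\Gamma)$-invariant.

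If $W_M$ is a free group (i.e.\ $W_M=F_k$ for some $k\geqslant 1$, including $W_M=Z$), then Lemma~\ref{l:oneclass}(a) supplies an $\OP{Aut}$-undistorted element $x\in W_M$; since $R_M$ restricts to the identity on $W_M\subseteq W_\Gamma$, Corollary~\ref{c:transfer} promotes $x$ to an $\OP{Aut}$-undistorted element of $W_\Gamma$. The remaining case is $W_M=Z^k$ with $k\geqslant 2$, or $W_M$ finite primary. Here $W_M$ has bounded $\OP{Aut}$-invariant norm, so retraction to $W_M$ is useless, and I construct a quasimorphism on $W_\Gamma=W_M\ast H$ directly. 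The analogous case analysis rules out cross-transvections in the opposite direction ($v\in M$, $u\in V\setminus M$): type~(a) would require $Lk(v)=\emptyset$, which fails whenever $|M|\geqslant 2$ because the vertices of $M$ pairwise commute, while in the sub-case $|M|=1$ the generator $v$ has $\#G_v$ finite and type~(b) still forces $v\in V\setminus M$. Since the induced subgraph on $M$ is either complete or a singleton, the set $M$ lies in a single connected component of $\Gamma-St(v')$ for every $v'\in V\setminus M$.

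These inputs allow a generator-by-generator check of $\OP{Aut}^0(W_\Gamma)$ which shows that every factor automorphism, (non-crossing) dominated transvection, and partial conjugation sends $W_M$ to a conjugate of $W_M$ and $H$ to a conjugate of $H$; an induction on composition length extends this to every $\psi\in\OP{Aut}^0(W_\Gamma)$. Applying Lemma~\ref{l:rolli} to $W_M\ast H$ (permissible since $W_\Gamma\neq D_\infty$) yields a non-zero homogeneous quasimorphism $\bar q$ on $W_\Gamma$ bounded on $W_M\cup H$. Homogeneity makes $\bar q$ constant on conjugacy classes, so it is bounded on $\bar S:=\OP{Aut}^0(W_\Gamma)\cdot V$, and Lemma~\ref{l:qm.undist} together with Corollary~\ref{c:eq} produces the desired undistorted element. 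The main obstacle is the free-factor stability claim in the abelian case, namely that $\OP{Aut}^0(W_\Gamma)$ preserves the conjugacy classes of $W_M$ and $H$ as subgroups; once this is in place the quasimorphism step is essentially formal.
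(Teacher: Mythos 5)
Your overall route is the paper's own: minimality of $M$ with respect to $\leqslant_\tau$ (your lower-cone step) plus Corollary \ref{c:transfer} disposes of the case where $W_M$ is free, maximality of $M$ rules out transvections crossing the free product, and then the split quasimorphism of Lemma \ref{l:rolli}, bounded on the $\OP{Aut}^0(W_\Gamma)$-orbit of $V$, is fed into Lemma \ref{l:qm.undist} via Corollary \ref{c:eq}. All of those steps match the paper and are argued correctly.

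The gap is exactly the step you flag as the main obstacle, and as you state it the claim is false within the lemma's hypotheses. Take $H = W_{V\setminus M} = D_\infty = \langle a\rangle \ast \langle b\rangle$ (allowed: $n=0$, $m=1$, $F=e$) and $W_M$ abelian, say $W_M = C_3$, so $W_\Gamma = C_3 \ast D_\infty \neq D_\infty$. For $v \in M$ one has $St(v)=M$, so $\Gamma - St(v)$ consists of the two isolated order-two vertices $a,b$, whose connected components are the singletons $\{a\}$ and $\{b\}$. The partial conjugation $\sigma_{v,\{a\}}$ sends $H$ to $\langle vav^{-1}, b\rangle$, which is \emph{not} a conjugate of $H$: its element $\sigma_{v,\{a\}}(ab) = vav^{-1}b$ is cyclically reduced of syllable length four in $W_M \ast H$, hence conjugate into neither free factor, whereas every element of a conjugate of $H$ is. You verified the relevant connectivity on the $M$ side ($M$ lies in one component of $\Gamma - St(v')$), but your generator-by-generator check also needs the symmetric condition on the $H$ side, namely connectivity of $\Gamma(V\setminus M)$, which can fail only when $H = D_\infty$ exactly --- and there your induction on composition length collapses. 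Two mitigating remarks. First, the paper's proof asserts the weaker, set-level claim that $(W_M\cup W_{V-M})^{W_\Gamma}$ is preserved by the generators, and that assertion fails on the same element $ab$; so you have reproduced, in a strengthened and hence more visibly false form, a glitch already present in the paper. Second, the repair is short and common to both proofs: one only needs $\bar S = \OP{Aut}^0(W_\Gamma)\cdot V \subseteq (W_M \cup W_{V-M})^{W_\Gamma}$, and in the exceptional case $H=D_\infty$ there are no dominated transvections within $V\setminus M$ (since $St(a)\not\subseteq St(b)$) and none across $M$ (your min/max step), so induction over generators shows every element of $\bar S$ is a conjugate of an element of $W_M\cup\{a,b\}$, on which $\bar q$ is bounded. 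With that case treated separately your argument closes; without it, the free-factor stability claim is not merely unproven but wrong as stated.
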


\begin{proof}
The class $M$ is minimal with respect to $\leq_\tau$. Indeed, assume by contradiction that $v \in M$, $w~\in~V~-~M$ and $w \leqslant_\tau v$. 
If $\# G_w < \infty$, then $St(w) \subseteq St(v)$ and $w$ commutes with $v$ which leads to a contradiction. 
If $\# G_w = \infty$, then $Lk(w) \subset St(v)$ and since $n>1$, there exists~$w'~\in Lk(w)$ which commutes with $v$, again a contradiction. 

Let $R_M \colon W_\Gamma \to W_M$ be a standard retraction. 
It follows from Corollary \ref{c:transfer} that if $W_M$ has $\OP{Aut}$-undistorted elements, 
then $W_\Gamma$ has $\OP{Aut}$-undistorted elements and the lemma is proven. 

Let us assume that $W_M$ has no $\OP{Aut}$-undistorted elements. 
Since $M$ is an equivalence class, it follows that $W_M = Z^n$, $n>1$ or $W_M$ is finite. 

Now we show that in this case $M$ is also a maximal class. 
Assume by contradiction that $v \in M$ and $w \in V-M$ and $v \leq_\tau w$. 
If $W_M$ is finite, then $St(v) \subseteq St(w)$ and $v$ commutes with $w$ which leads to a contradiction. 
If $W_M = Z^n$, then $Lk(v) \subseteq St(w)$ and there exists~$v'~\in~Lk(v)$ which commutes with $w$, again a contradiction. 

We showed that $M$ is maximal and minimal. Thus if a dominated transvection $\tau_{v,w}$ is well defined, then $v,w~\in~M$ or $v,w~\in~V-M$. 
Let 
$$
(W_M \cup W_{V-M})^{W_\Gamma} = \{ yxy^{-1}~|~y \in W_\Gamma, x \in W_M \cup W_{V-M}\}.
$$

It is clear that  $(W_M \cup W_{V-M})^{W_\Gamma}$ is preserved by dominated transvections, factor automorphisms and partial conjugations.
It follows that $(W_M~\cup~W_{V-M})^{W_\Gamma}$ is an $\OP{Aut}^0(W_\Gamma)$-invariant subset. 

Assume that $W_\Gamma \neq D_\infty$ and let $q$ be a homogeneous quasimorphism from Lemma \ref{l:rolli}, where $G_1 = W_M$ and $G_2 = W_{V-M}$.
Let $S = V$ and $\bar{S} = \OP{Aut}^0(W_\Gamma)S$. 
If $x \in \bar{S}$, then $x = yx_0y^{-1}$, where $x_0 \in W_M \cup W_{V-M}$ and $y \in W_\Gamma$.
We have $q(x) = q(x_0)$, thus $q$ is bounded on $\bar{S}$.
Due to Lemma \ref{l:qm.undist} and Corollary \ref{c:eq}, $W_\Gamma$ has $\OP{Aut}$-undistorted elements. 
\end{proof}

\begin{proof}[Proof of Theorem \ref{t:main2}]
We may assume that for every $v \in V$ the group $G_v$ is primary or infinite cyclic (see Section \ref{s:graph.prod}).

Let us first note, that if $W_\Gamma = Z^n \times D_\infty^m \times F$, $n\neq1$, $F$ finite, then 
the $\OP{Aut}$-invariant word norm on $W_\Gamma$ is bounded. 
Indeed, $|\mathord{\cdot}|_{Aut}$ is bounded on $Z^n$ and $D_\infty$ (Lemma \ref{l:oneclass} and Lemma \ref{l:dihedral})
and the Cartesian product of groups with bounded $\OP{Aut}$-invariant word norms has bounded $\OP{Aut}$-invariant word norm. 

Now we proceed by induction on the number of equivalence classes of~$\sim_\tau$. 
The case when $\Gamma$ has exactly one equivalence class was considered in Lemma \ref{l:oneclass}.

Let $M \subset \Gamma$ be a maximal equivalence class. 
Then $V-M$ is a lower cone. It follows from Corollary \ref{c:transfer} 
that if $W_{M-V}$ has $\OP{Aut}$-undistorted elements, then so does $W_M$.

Let us assume that $W_{M-V}$  has no $\OP{Aut}$-undistorted elements.
Let $\Gamma(V-M)$ be the full subgraph of $\Gamma$ spanned by vertices $V-M$.
The relation $\sim_\tau$ defined with respect to $\Gamma(V-M)$ has fever equivalence classes then
the relation $\sim_\tau$ defined with respect to $\Gamma$. 
Thus, by the induction hypothesis, we have $W_{V-M} = Z^n \times D_\infty^m \times F$, $n \neq 1$, $F$ finite.

Let us remind the reader that if $v \in V-M$ commutes with some $w\in M$, then $v$~commutes with every element of $M$. 
We define 
$$
L =  \{ v \in V-M~|~[v,w]\neq e~\text{ for }~w\in M\}.
$$
If $L$ is empty, then $W_\Gamma = W_M \times W_{V-M}$ and the theorem easily follows. 
Let us assume that $L$ is non-empty.
We have $W_{L} = Z^{n_1} \times D_\infty^{m_1} \times F'$, where $F'$ is finite.
Note that the set of vertices generating $Z^{n_1}$ is a~minimal equivalence class in $V$. 
Thus if $n_1=1$, then
by Corollary \ref{c:transfer}, $W_\Gamma$ has $\OP{Aut}$-undistorted elements. 
Let us assume that $n_1 \neq 1$. 

The set $M \cup L$ is a lower cone in $\Gamma$.
Indeed, if $v \in L$ and $w \leqslant_\tau v$, then $Lk(w) \subset St(v) \subset V-M$, thus $w \in L$. 
Hence $W_\Gamma$ has $\OP{Aut}$-undistorted elements provided that $W_M \neq C_2$ 
or $W_L \neq C_2$ (Lemma \ref{l:free.product} and Corollary \ref{c:transfer}). 

There is one last case to consider, namely $W_M = W_L = C_2$.
Let $v_0 \in V$ be the generator of $W_L$ and $w$ the generator of $W_M$.
Note that $v_0 \in F$. 
Indeed, otherwise $v_0$ would be a generator of a $D_\infty$ factor of $W_{V-M}$. 
Then the other generator of $D_\infty$, say $v_1$, would commute with $w$ and then $St(w) \subseteq St(v_1)$
which gives $w \leq_\tau v_1$. This contradicts maximality of $w$.
Thus $v_0 \in F$ and 
$$
W_\Gamma = Z^n \times D_\infty^m \times \langle w,v_0 \rangle \times F/\langle v_0 \rangle = 
Z^n \times D_\infty^{m+1} \times F/\langle v_0 \rangle,
$$
a group with bounded $\OP{Aut}$-invariant word norm.
\end{proof}
 
\section{Problems}

\begin{problem}\label{p1}
For which $W_\Gamma$ does there exist a non-zero homogeneous $\OP{Aut}$-invariant (or $\OP{Aut}^0(W_\Gamma)$-invariant) quasimorphism on $W_\Gamma$?
\end{problem}

If there exists a non-zero homogeneous $\OP{Aut}(G)$-invariant quasimorphism on $G$, 
then the $\OP{Aut}$-invariant word norm is automatically unbounded. 
For $W_\Gamma = F_n$, Problem \ref{p1} was posed by Mikl\'{o}s Ab\'{e}rt \cite[Question 47]{abert}. 
The answer is positive if $W_\Gamma$ is the free group of rank two \cite[Theorem 2]{bm}. 

We say that $x \in G$ is $\OP{\bf Aut}$-\textbf{bounded}, if there exists $C \in \B R$ such that $|x^n|_{Aut}<C$ for every $n \in \B Z$.

\begin{problem}\label{p2}
Is every $\OP{Aut}$-distorted element of $W_\Gamma$ also $\OP{Aut}$-bounded?
If $x \in W_\Gamma$ is $\OP{Aut}$-undistorted, does there exist a homogeneous quasimorphism $q \colon W_\Gamma \to \B R$
which is Lipschitz with respect to the $\OP{Aut}$-invariant word norm and such that $q(x) \neq 0$?
\end{problem}

\begin{problem}
Characterise $\OP{Aut}$-distorted and $\OP{Aut}$-undistorted elements of $W_\Gamma$. 
\end{problem}

For free groups, Theorem 2.10 in \cite{bm} provides a simple characterisation of $\OP{Aut}$-undistorted (or bounded) elements.

\bibliography{bibliography}
\bibliographystyle{plain}

\end{document}